\documentclass[12pt,a4paper]{amsart}
\usepackage{PlantillaEng}
\title{Cyclic subgroups of Belk-Hyde-Matucci group $V\!\Ac$}
\author{Jos\'e Burillo}
\address{Departament de Matem\`atiques,
Universitat Polit\`ecnica de Catalunya, Jordi Girona 1-3, 08034
Barcelona, Spain} \email{pep.burillo@upc.edu}
\thanks{The first author thanks the Spanish Ministry MICINN
through grant PID2021-126851NB-I00P for their support.}
\author{Marc Felipe}
\address{}
\email{marc.felipe.alsina@gmail.com}

\newcommand\Sing{\operatorname{Sing}}
\date{}
\begin{document}
    \maketitle
\begin{abstract}
    In this paper it is proved that the Belk-Hyde-Matucci group $V\!\Ac$, a group containing
every countable abelian group, does not contain subgroups with
distorted cyclic subgroups.
\end{abstract}

\section*{Introduction}
The family of Thompson-style groups contains many examples of
finitely presented simple groups. This fact makes the Thompson
family a natural place to look for possible answers to the
Boone--Higman conjecture, which states that any finitely generated
group has a solvable word problem if and only if it embeds into a
finitely presented \emph{simple} group. See the survey
\cite{boonehigmansurvey} (and references therein) for all the
details concerning the conjecture.

In this survey \cite{boonehigmansurvey}, the authors present the
current progress on the conjecture. The authors show in this paper
that there cannot be a universal container, i.e., a finitely
presented simple group that contains every finitely presented group
with a solvable word problem. However, they analyze some particular
examples of groups, all related to the Thompson family, which show
the ability of containing large classes of groups, while at the same
time having some obstructions to contain some other groups.

One of the examples they mention is Thompson's group $V,$ which
contains many groups, such as all finite groups, (countable) free
abelian groups, and also countable nonabelian free groups). At the
same time, there are many groups which do not embed in $V,$ due to
several obstructions. One of these obstructions is the fact that
cyclic subgroups are undistorted in $V,$ so any subgroup of $V$ must
also have undistorted cyclic subgroups.

Another group mentioned in \cite{boonehigmansurvey} is
Belk-Hyde-Matucci group $V\!\Ac$, which is finitely presented and
simple, and it is shown by the authors in \cite{belkhydematucci} to
contain all countable abelian groups (such as $\mathbb{Q}$). The
purpose of this paper is to show that, as it happens in $V$, all
cyclic subgroups of $V\!\Ac$ are undistorted, and hence if we want a
group to embed in $V\!\Ac$ it had better have also undistorted
cyclic subgroups. This excludes as subgroups of $V\!\Ac$ groups such
as the Heisenberg groups (in general any finitely generated
nilpotent groups which are not virtually abelian) or
Baumslag-Solitar groups.

Moreover, we believe that the result on nondistorted cyclic
subgroups of $V\!\Ac$ has interest on its own, for instance for its
relation to the Kaplansky conjectures. On the other hand, some
distorted subgroups of $V\!\Ac$ are known, see \cite{auto}. The
group named $C$ in \cite{auto} is a subgroup of $\Ac$ and hence is
also in $V\!\Ac$, while it has a quadratically distorted copy of
$F$. Hence, $V\!\Ac$ contains a copy of $F$ which is at least
quadratically distorted.

\section{Definitions and Background}
We recall some definitions:

    A finitely generated subgroup $H$ of a finitely generated group $G$ is said to be undistorted in $G$ if the inclusion from $H$ to $G$ is a quasi-isometric embedding. That means that if we fix finite generating sets for $H$ and $G$ and we compare the minimum lengths $\ell_H(h)$ and $\ell_G(h)$ of the words on the generators (and their inverses) of $H$ and $G$ needed to write an element $h\in H$, we have the following inequality for some constant $C$ independent of $h$: $\ell_H(h)\leq C\ell_G(h)+C$.

    It is worth noting that the opposite condition $\ell_G(h)\leq C\ell_H(h)+C$ always holds when $H$ is a subgroup of $G$: by writing each generator $s_H$ of $H$ as words on the generating set for $G$, we can rewrite the word for $h$ in at most $\max\limits_{s_H}\left\{\ell_G(s_H)\right\}\cdot\ell_H(h)$ symbols.

    Let's state some well-known results about distortion:
    \begin{rmk} Let $K\leq H\leq G$ be an inclusion of finitely generated groups, and let $K$ be undistorted in $H$ and $H$ undistorted in $G$. Then $K$ is undistorted in $G$.\end{rmk}
    \begin{rmk} Let $K\leq H\leq G$ be an inclusion of finitely generated groups, and let $K$ be undistorted in $G$. Then $K$ is undistorted in $H$.\end{rmk}
    \begin{rmk} Let $x$ be an element of infinite order of the group $G$ and let $m$ be a non-zero integer. If $\<x^m\>$ is undistorted in $G$, so is $\<x\>$.\end{rmk}
    \begin{rmk} Let $a,x$ be elements of a group $G$ with $x$ having infinite order, then $\<x\>$ is an undistorted subgroup of $G$ if and only if $\<a^{-1}xa\>$ also is.\end{rmk}

    The first two remarks come from nesting two inequalities of the form $\ell_{G_1}(g)\leq C\ell_{G_2}(g)+C$. The third one from the fact that $\<x^m\>$ is undistorted in $\<x\>$, and the fourth one from the inequality $\ell_G(x^n)-2\ell_G(a)\leq\ell_G((a^{-1}xa)^n)\leq\ell_G(x^n)+2\ell_G(a)$.\\

    Now, we will focus on defining the Belk-Hyde-Matucci group $V\!\Ac$, and for that we will need $V$ and $\Ac$.

    Thompson's group $V$ is a well-known group of maps from the Cantor set $\Cfk=\{0,1\}^\NN$ to itself. Since the Cantor set is difficult to visualize, we usually identify it with the interval $[0,1]$ using the quotient map $\phi:\Cfk\to[0,1]$ which sends the sequence $x_1,x_2,x_3,\dots\in\Cfk$ to the real number $0.x_1x_2x_3\dots$ in binary. Under this identification, $V$ is the set of maps from $[0,1]$ to itself which are bijective, piecewise linear with finitely many pieces, and with the property that each piece starts and ends at a point with dyadic coordinates, and its slope is an integer power of $2$.

    The map $\phi$ is surjective, but not injective, as the dyadic numbers in $(0,1)$ have two representations as binary digits: $\frac12=0.10000000\ldots=0.01111111\ldots$. So, there is an ambiguity when defining the image of the dyadic numbers where a discontinuity occurs. Usually, this ambiguity is circumvented either by imposing right-continuity or by working with $\Cfk$ instead of $[0,1]$.

    Here, we will work with $\Cfk$, because we are interested in the behaviour in the boundaries. However, for legibility purposes, we will use numbers in $[0,1]$ to label the points of $\Cfk$, using a superscript $+$ or $-$ for dyadic numbers. So, the sequence $0,0,1,0,0,0,0,0,\dots$ is denoted by $\left(\frac18\right)^+$, the sequence $0,1,0,1,0,1,0,1,\dots$ by $\frac13$, and the sequence $0,1,0,1,1,1,1,1,\dots$ by $\left(\frac38\right)^-$. If $p\in[0,1]$, a neighborhood of $p$ is $(p-\eps,p+\eps)$, a neighborhood of $p^+$ is $[p,p+\eps)$ and a neighborhood of $p^-$ is $(p-\eps,p]$. We will often use the adjective \emph{one-sided} to refer to neighborhoods of the last two kinds.

    We present an example of an element $f$ of $V$ in \Cref{exV}. Notice that in this example, $f\left(\left(\frac12\right)^-\right)=1^-$, $f\left(\left(\frac12\right)^+\right)=\left(\frac58\right)^+$, and that $f\left(\left(\frac18\right)^-\right)=\left(\frac38\right)^-$, $f\left(\left(\frac18\right)^+\right)=\left(\frac38\right)^+$.\\
    \begin{figure}[H]
      \centering
      \includegraphicsifexists[width=0.45\linewidth]{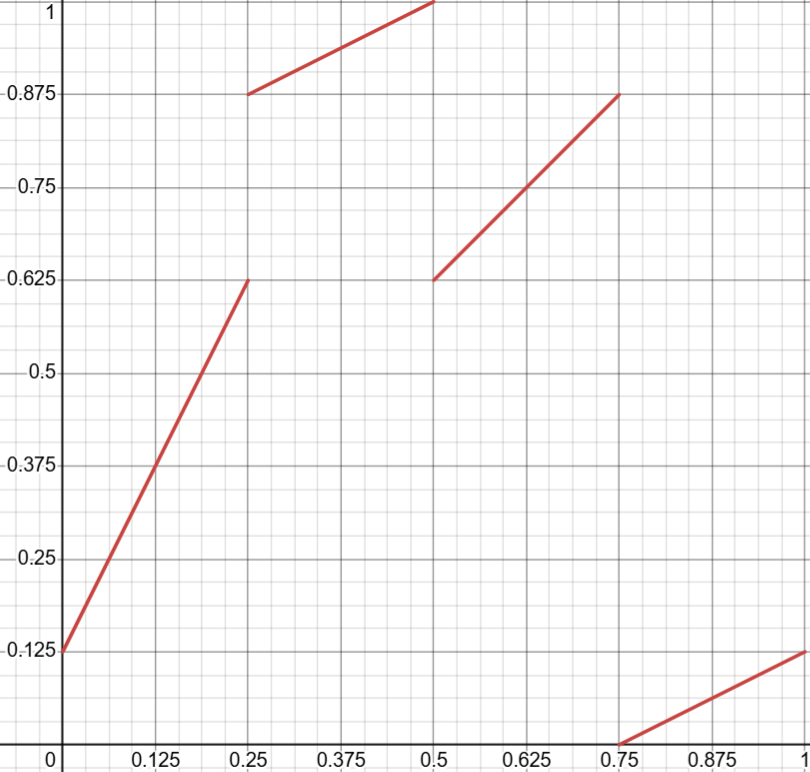}
      \caption{An element of $V$.}\label{exV}
    \end{figure}

    Brin's group $\Ac$, sometimes known as $\Aut^+(F)$, is a certain group of functions $f$ from $\RR$ to $\RR$ introduced by Brin in \cite{Brin}. By conjugating by the map $\psi:(0,1)\to\RR$ that sends linearly $\left[\frac1{2^{n+1}},\frac1{2^n}\right]$ to $[-n,-n+1]$ and $\left[1-\frac1{2^n},1-\frac1{2^{n+1}}\right]$ to $[n-1,n]$ for each integer $n\geq1$, we obtain an isomorphic group of functions from $[0,1]$ to $[0,1]$,
    whose elements are the functions $f$ that satisfy the following conditions:
    \begin{itemize}
        \item[$-$] $f(0)=0$ and $f(1)=1$.
        \item[$-$] $f$ is continuous and piecewise linear, but breakpoints may accumulate only at $0$ and $1$.
        \item[$-$] Each piece has a power of 2 as their slope, and starts and ends at points with dyadic coordinates.
        \item[$-$] In a neighborhood of $0$, $f$ satisfies $f(2x)=2f(x)$, and in a neighborhood of $1$, $f$ satisfies $f(2x-1)=2f(x)-1$.
    \end{itemize}

    Visually, the last condition states that the graph of the function experiences a fractal-like behavior in a neighborhood of the corresponding point, where it remains invariant under a $\times2$ zoom near $(0,0)$ and near $(1,1)$. This behavior near 0 and 1 is due to the behavior in the neighborhoods of $\infty$ and $-\infty$ in the real-line interpretation of $\Aut F$ used by Brin in \cite{Brin},  and corresponds to the fact that maps are \emph{periodic} there,  i.e., satisfy $f(t+1)=f(t)+1$.

    Now that the domain and codomain are $[0,1]$, it is easy to make them be the Cantor set: for each dyadic $p\in[0,1]$, just let $f(p^-)$ be $(f(p))^-$ and $f(p^+)$ be $(f(p))^+$.

    We present an example of an element of $\Ac$:\\
    \begin{figure}[H]
      \centering
      \includegraphicsifexists[width=0.45\linewidth]{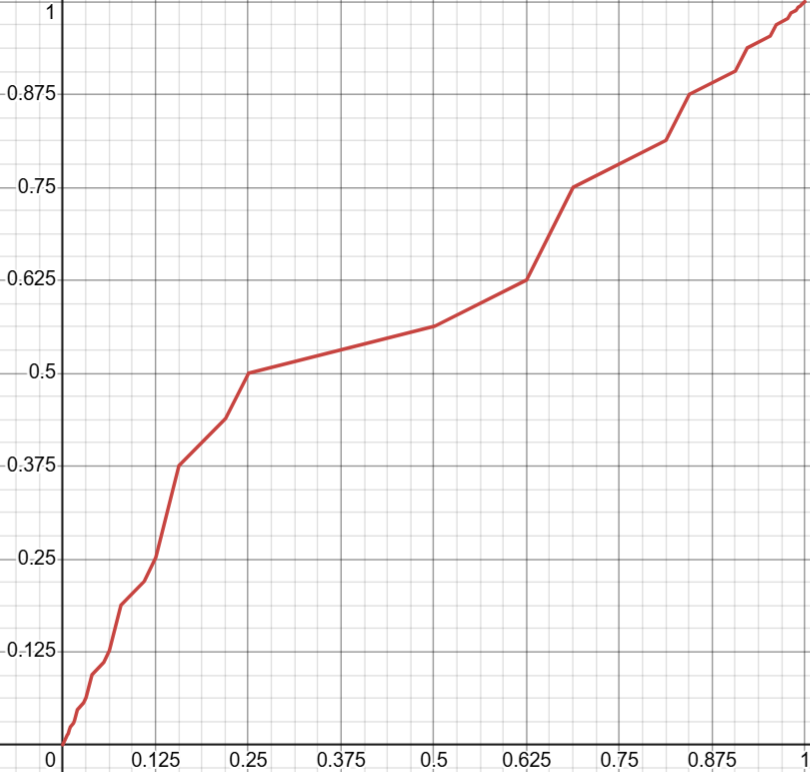}
      \caption{An element of $\Ac$.}
    \end{figure}

    It is worth noting that, even though an element of $\Ac$ may have infinitely many linear pieces, there is only a finite number of slopes these pieces can have, as the ones near $0$ and $1$ are bound to have the same slope as a further segment.\\

    The Belk-Hyde-Matucci group $V\!\Ac$, introduced in \cite{belkhydematucci}, is the group of functions that is generated by $V$ and $\Ac$, when both are seen as functions from $\Cfk$ to $\Cfk$. They proved that this group is finitely generated, as $V$ and $\Ac$ are, and is the group of functions satisfying the following:

    \begin{itemize}
        \item[$-$] $f$ is bijective and piecewise linear, but breakpoints may accumulate at a finite number of points called singularities.
        \item[$-$] Each piece has a power of 2 as their slope, and starts and ends at points with dyadic coordinates.
        \item[$-$] Each singularity is of the form $p^+$ or $p^-$ for some dyadic number $p$. The image of the singularity is also of this form, with the same superscript.
        \item[$-$] If $p^\pm$ is a singularity and $f(p^\pm)=q^\pm$, then in a sufficiently small (one-sided) neighborhood of $p^\pm$, the function is continuous when drawn as a function from $[0,1]$ to itself and satisfies $f\circ L_p=L_q\circ f$ where $L_r(x)=2(x-r)+r$.
    \end{itemize}

    We will say that a neighborhood of a singularity is regular if it small enough to satisfy the last condition.

    \begin{figure}[H]
        \centering
        \includegraphicsifexists[width=0.45\linewidth]{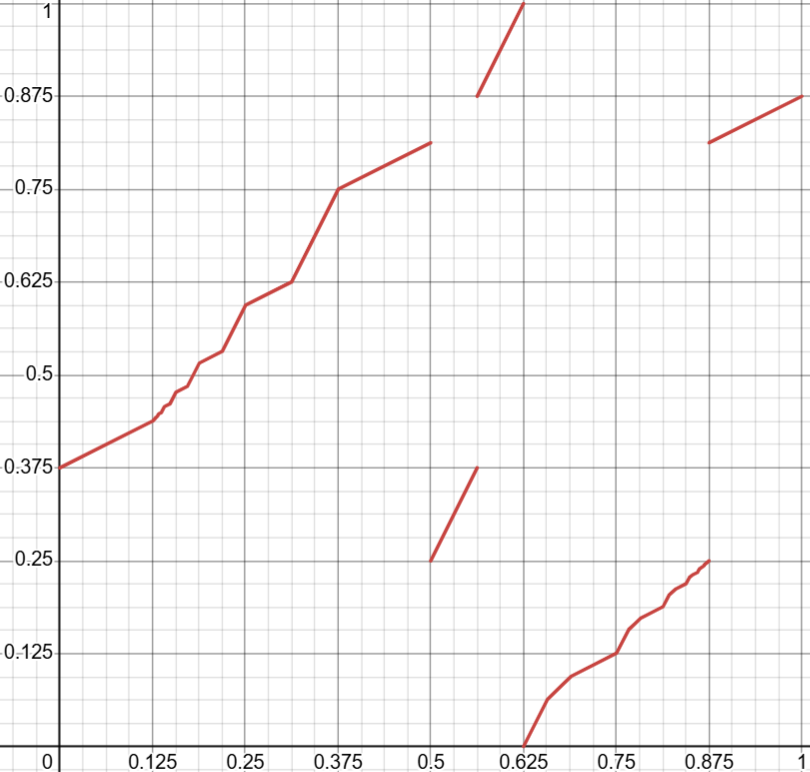}
        \caption{Element of $V\!\Ac$ with two singularities, one at $\left(\frac18\right)^+$ and one at $\left(\frac78\right)^-$.}
    \end{figure}

    When multiplying two elements of $V\!\Ac$, we can approximately locate its singularities. Indeed, if $f,g\in V\!\Ac$ and $f\cdot g\,(=g\circ f)$ has a singularity at $s\in\Cfk$, then it can only be due to the following two reasons: $f$ itself has a singularity at $s$, or $f$ sends $s$ to $t$, where $t$ is a singularity of $g$. That is, $s\in\Sing(f)\cup f^{-1}(\Sing(g))$. On the other hand, if only one of those two things happen, that is $s\in\Sing(f)\vartriangle f^{-1}(\Sing(g))$, $s$ will be a singularity of $f\cdot g$, but may not be if both happen at the same time, as the singularity created by $f$ may be undone by the singularity of $g$ at $f(s)$. So, we can conclude the following:
    \begin{rmk} $f^{-1}(\Sing(g))\vartriangle\Sing(f)\subseteq\Sing(fg)\subseteq f^{-1}(\Sing(g))\cup\Sing(f)$, where $\vartriangle$ denotes the symmetric difference of sets.\end{rmk}

    A direct consequence of this is $$|\Sing(fg)|\leq|\Sing(f)|+|f^{-1}(\Sing(g))|=|\Sing(f)|+|\Sing(g)|$$

\section{Infinite cyclic subgroups are undistorted}
    We will now prove the main theorem of this paper:

    \begin{thm} Let $f\in V\!\Ac$ have infinite order. Then $\<f\>$ is undistorted in $V\!\Ac$.
    \end{thm}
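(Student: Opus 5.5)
We need a quasi-morphism-like quantity $\Phi$ on $V\!\Ac$ satisfying two properties. First, it should be \emph{Lipschitz}: $\Phi(gs)\le \Phi(g)+c$ for every generator $s$. Second, it should grow \emph{linearly along powers}: $\Phi(f^n)\ge \delta n - C$ for some $\delta>0$. Then $\ell_{V\!\Ac}(f^n)\ge (\delta n-C)/c$, which is the required undistortion estimate.

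\textbf{Reduction via conjugation and powers.} By the fourth remark we may replace $f$ by a conjugate, and by the third remark by a power $f^m$. The singular set $\Sing(f)$ is finite. By the product remark, $f$ permutes the orbit structure: points of $\Sing(f^n)$ lie in $\bigcup_{k<n} f^{-k}(\Sing(f))$. I would split into two cases according to whether $|\Sing(f^n)|$ is bounded or grows.

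\textbf{Case A: $|\Sing(f^n)|$ grows linearly.} Take $\Phi(g)=|\Sing(g)|$. The inequality $|\Sing(fg)|\le|\Sing(f)|+|\Sing(g)|$ gives the Lipschitz property with $c=\max_s|\Sing(s)|$. So it suffices to show $|\Sing(f^n)|\ge \delta n - C$ whenever it is unbounded. For this, look at an orbit $s, f(s), f^2(s),\dots$ of a singular point. The lower bound $f^{-1}(\Sing(g))\vartriangle\Sing(f)\subseteq \Sing(fg)$ shows that singularities can only cancel when an orbit returns to $\Sing(f)$. Since the orbits are infinite, non-periodic, and start at singular points not recaptured, each such orbit contributes one new singularity per step. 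The main bookkeeping is that a forward orbit hitting $\Sing(f)$ finitely often eventually stops cancelling. Hence either all orbits through $\Sing(f)$ are eventually periodic, which puts us in Case B after passing to a power, or the growth is linear.

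\textbf{Case B: bounded singularities.} Replacing $f$ by a power, every singularity of $f$ is fixed and $\Sing(f^n)\subseteq\Sing(f)$. Near each fixed singularity $p^\pm$, $f$ commutes with the local dilation $L_p$, so its germ is determined by a "periodic" map: conjugating by the analogue of $\psi$, $f$ becomes a translation-periodic map near $\pm\infty$ of the line. This is essentially an element of $\Ac$ locally, with a rotation number $\rho_p(f)$. Away from the singularities, on the compact complement (a finite union of clopen sets), $f$ acts like an element of $V$, with finite pieces.

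There are two sub-cases. If some fixed singularity has $\rho_p(f)\neq 0$, I would use a measure of the "germ displacement" at that point. Concretely, let $\Phi(g)=\max$ over singularities and regular neighbourhoods of $|\log_2|$ slopes$|$, or the number of breakpoints inside one fundamental domain $[2^{-k-1},2^{-k}]$-scale window. Such a quantity grows linearly under iteration of a nonzero translation germ, since the fundamental-domain translation accumulates. Also, each generator changes it by a bounded amount, although this Lipschitz property needs checking. Otherwise $f$ is, up to a bounded perturbation, an element of $V$ supported on clopen sets, and I would mimic the proof that cyclic subgroups of $V$ are undistorted. That proof uses the number of breakpoints, or the size of the tree-pair diagram, $N(g)$, which is Lipschitz in the word length and grows linearly for infinite-order elements because of attracting/repelling periodic points with slope $\ne1$. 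I would adapt $N$ to $V\!\Ac$ by counting breakpoints outside fixed regular neighbourhoods of the singularities, plus breakpoints per fundamental domain near them.

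\textbf{Main obstacle.} The hardest step is defining a single complexity function on all of $V\!\Ac$ that is Lipschitz under the generators and finite despite the accumulating breakpoints. I would try "number of breakpoints modulo the $L_p$-periodicity" (breakpoints in one fundamental domain of each singularity plus those outside), and prove linear growth by the $V$-style dynamical argument on the non-singular part.
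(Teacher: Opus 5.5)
\textbf{First gap: the split into Cases A and B.} Bounded $|\Sing(f^n)|$ does not force the singular orbits to be periodic, because singular germs along an infinite orbit can cancel completely. For example, let $f_0\in V$ have an infinite orbit $\dots,s_{-1},s_0,s_1,\dots$. Let $a\in V\!\Ac$ be supported in a small one-sided neighbourhood of $s_1$, fixing $s_1$, with $\Sing(a)=\{s_1\}$. Then $f=a^{-1}f_0a$ is singular exactly at $s_0$ and $s_1=f(s_0)$, and $\Sing(f^n)=\{s_{1-n},s_1\}$ for all $n\ge1$. The count is bounded, yet no power of $f$ fixes these points, so your reduction ``replacing $f$ by a power, every singularity of $f$ is fixed'' fails. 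Your assertion that an orbit ``eventually stops cancelling'' is exactly the false step.

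What is missing is the paper's normalising conjugation, performed \emph{before} the case split. Suppose the singularities of an orbit lie among $s_0,\dots,s_m$. Let $f'$ agree with $f$ outside a regular neighbourhood $U$ of $s_m$, and be non-singular on $U$ with $f'(s_m)=s_{m+1}$. Then $a=f\cdot f'^{-1}$ is supported in $U$ with $\Sing(a)=\{s_m\}$, and $a^{-1}fa=f'a$ has lost $s_m$, with only $s_{m-1}$ possibly changing status. Iterating leaves at most one singularity per orbit (none in the example above). After that, an infinite singular orbit gives exactly $\Sing(f^n)=\{s_{-n+1},\dots,s_0\}$, and a periodic orbit of length $m$ makes $f^m$ singular at a fixed point.

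\textbf{Second gap: rotation number $0$ and the singularity-free case.} Here you rely on a breakpoint count (outside regular neighbourhoods plus per fundamental domain). You never establish its Lipschitz property on $V\!\Ac$ or its linear growth along powers. The tree-pair size is not even defined on $V\!\Ac$, since words in its generators pass through elements with infinitely many breakpoints. The claim that $f$ is then ``up to a bounded perturbation an element of $V$'' also gives you nothing. Indeed, $f$ may be the identity outside a regular neighbourhood of a fixed singularity with rotation number $0$, so the growth has to come from the singular germ itself.

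The function you mention in the other sub-case settles every case, provided the maximum is taken over all of $\Cfk$ rather than over regular neighbourhoods. The function $\Phi(g)=\max|\log_2(\text{slope of } g)|$ is subadditive by the chain rule and bounded on the generators (an element of $\Ac$ has only finitely many slopes), so its Lipschitz property needs no checking. Linear growth of $\Phi(f^k)$ follows from a segment of slope $\lambda\neq1$ emanating from a fixed point, since $f^k$ then has slope $\lambda^k$ there. For $f\in V$, such a segment exists for a power of $f$ by Higman's lemma. At a fixed singularity $s$ with rotation number $0$, there is a fixed point $x_0\neq s$ in a regular neighbourhood $U$. Since $f$ is not linear near $s$, either the segment through $x_0$ has slope $\neq1$, or it lies on the diagonal and ends, towards $s$, at a fixed breakpoint followed by a segment of slope $\neq1$. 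Your nonzero-rotation sub-case is essentially the paper's argument: there $f(x)<x$ (or $f(x)>x$) on $U\setminus\{s\}$, and self-similarity gives, e.g., $f^k(x)\le p-\lambda^k(p-x)$ with $\lambda>1$ near $s=p^-$.
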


    Since $\ell_{\<f\>}(f^k)=|k|$, we need to prove that $|k|\leq C\ell_{V\!\Ac}(f^k)+C$, which is the same as seeing that $\ell_{V\!\Ac}(f^k)$ is at least linear in $|k|$, and that is what we will do. In fact, since $\ell_G(f)=\ell_G(f^{-1})$, it is enough to prove it for positive $k$.\\

    Now, we consider the orbits of the singularities of an element $f\in V\!\Ac$. That is, the sequence $\dots,f^{-2}(s),f^{-1}(s),s,f(s),f^2(s),\dots$ for every singularity $s$.
    \begin{lmm} There exists a conjugate of $f$ with at most one singularity per orbit.
    \end{lmm}
    \begin{prf}
        If the orbit of a singularity is infinite, label it $\dots,s_{-1},s_0,s_1,s_2,\dots,s_{m-1},s_m,s_{m+1},\dots$, with $f(s_i)=s_{i+1}$ for every integer $i$. If the orbit is periodic, label it $s_0,s_1,\dots,s_m,\dots,s_k$ with $f(s_i)=s_{i+1}$ and $f(s_k)=s_0$. In any case, let the orbit be labelled in a way that all the finitely many singularities in this orbit lie in the subset $\{s_0,s_1,\dots,s_m\}$, with $s_0$ and $s_m$ being singularities. Note that $s_1,s_2,\dots,s_{m-1}$ may or may not be singularities of $f$.

        Take a regular (one-sided) neighborhood $U$ of $s_m$, which by definition does not contain any other singularity of $f$. The image of this neighborhood under $f$ is a neighborhood $f(U)$ of $s_{m+1}$. Let $f'$ be an element of $V\!\Ac$ that coincides with $f$ everywhere except in $U$. We choose $f'$ so that $f'(s_m)=s_{m+1}$ and there are no singularities nor discontinuities of $f'$ in $U$, which is easy to do due to transitivity in Thompson's groups. So, the singularities of $f'$ are the same as the ones in $f$, except it lacks $s_m$.

        Now, let $a=f\cdot f'^{-1}$. Since $a$ is the identity everywhere outside $U$, the only singularities that $a$ can have are in $$\left(f^{-1}(\Sing(f'^{-1}))\cup\Sing(f)\right)\cap U=\left(f^{-1}(\Sing(f'^{-1}))\cap U\right)\cup\left(\Sing(f)\cap U\right)$$ Note that for any function $g$, we have $\Sing(g^{-1})=g(\Sing(g))$, which we can see intuitively or via $$g(\Sing(g))\vartriangle\Sing(g^{-1})\subseteq\Sing(g^{-1}\cdot g)=\Sing(\Id)=\void$$ So, $f^{-1}(\Sing(f'^{-1}))=f^{-1}(f'(\Sing(f')))=a^{-1}(\Sing(f'))$. As $\Sing(f')$ does not intersect with $U$ and $a$ is the identity outside of $U$, we have that $f^{-1}(\Sing(f'^{-1}))\cap U=\void$, so the only possible singularity is in $\Sing(f)\cap U=\{s_m\}$. It is indeed a singularity as it belongs to $\Sing(f)$ but not to $f^{-1}(\Sing(f'^{-1}))$. Since $f(s_m)=s_{m+1}$ and $f'^{-1}(s_{m+1})=s_m$, we see that $s_m$ is a fixed point of $a=f\cdot f'^{-1}$.

    Knowing that, we study the singularities of $a^{-1}fa$, which is equal to $f'a$. We have that $f'^{-1}(\Sing(a))=\{s_{m-1}\}$ and $\Sing(f')=\Sing(f)\trec\{s_m\}$, so $a^{-1}fa$ has the same singularities as $f$, except it does not have $s_m$ anymore and it may either gain, retain or lose $s_{m-1}$ as a singularity. In any case, the singularities in other orbits have been unaffected and the interval of possible locations for singularities in the orbit in question has shrunk by at least one unit, at the cost of changing $f$ for a conjugate. Repeating this process, eventually we either reach the case $m=0$ or a state where no singularities are left in this orbit. Doing the same for other orbits, we obtain the desired result.
    \end{prf}

    Since the distortion of the subgroups generated by $f$ and one of its conjugates is the same, we may suppose that $f$ has at most one singularity per orbit.\\

    The elements of $V\!\Ac$ without singularities are precisely the elements of $V$. For those, we have the following lemma by Higman \cite{Higman}:
    \begin{lmm} For any element $f$ of infinite order in $V$, there is an integer $n$ such that for the reduced tree-pair diagram $(S,T,\pi)$ for $f^n$, there is a leaf $i$ in the source tree $S$ which is paired with a leaf $j$ in the target tree $T$ so that $j$ is a proper child of $i$.\end{lmm}
    By the lemma, when restricted to the interval representing $i$, $f^n$ will be a linear contraction, so it will have a fixed point (which may not be dyadic) with a segment, emanating from at least one side of it, with a slope of $2^{-m}$ for $m\geq1$. Since the slopes on the generators of $V$ and $\Ac$ have a lower and upper bound, we will need at least a linear-in-$|k|$ quantity of them in order to construct the corresponding segment of $(f^n)^k$ with slope $(2^{-m})^k$, so $\<f^n\>$ is undistorted and so is $\<f\>$.

    We are left with the elements with singularities. We will first do the case where there is an infinite orbit. Let $s_0$ be the only singularity in this orbit and $\dots,s_{-2},s_{-1},s_0,s_1,s_2,\dots$ be its orbit, where $s_i=f^i(s_0)$. Then we claim by induction that $\Sing(f^n)=\{s_{-n+1},\dots,s_{-1},s_0\}$. The base case $\Sing(f)=\{s_0\}$ is clear, and for the induction step, we know that
    $$f^{-1}(\Sing(f^n))\vartriangle\Sing(f)\subseteq\Sing(f^{n+1})\subseteq f^{-1}(\Sing(f^n))\cup\Sing(f)$$
    Since $\Sing(f^n)=\{s_{-n+1},\dots,s_{-1},s_0\}$ by hypothesis, applying $f^{-1}$ to it transforms it into $\{s_{-n},\dots,s_{-2},s_{-1}\}$. This set is disjoint with $\Sing(f)=\{s_0\}$, so the left-hand side and the right-hand side of the inclusions coincide. So, $\Sing(f^{n+1})=\{s_{-n},\dots,s_{-1},s_0\}$, completing the induction.

    This means that $f$ has infinite order and the number of singularities in the powers of $f$ grow at least linearly with the exponent. Since multiplying by a generator of $V$ or $\Ac$ can only add a maximum of $2$ singularities, we see that $\<f\>$ is undistorted in $V\!\Ac$.\\

    We will now study the case with a singularity $s_0$ in a finite orbit, which we label as\newline $s_{-m+1},s_{-m+2},\dots,s_{-1},s_0$, where $f(s_i)=s_{i+1}$ and $f(s_0)=s_{-m+1}$. Applying the same induction as before, we know that $\Sing(f^n)=\{s_{-n+1},\dots,s_{-1},s_0\}$ for $n\leq m$, after which the induction breaks, as $f^{-1}(\Sing(f^n))$ and $\{s_0\}$ are no longer disjoint. For $n=m$, it is particularly interesting, as we see that $f^m(s_i)=s_i$, so we have singularities that are also fixed points.

    Using the lemma stated and proved below, we see that $\<f^m\>$ is undistorted and $\<f\>$ is as well, concluding the proof that infinite cyclic subgroups of $V\!\Ac$ are undistorted.

    \begin{lmm} Let $g\in V\!\Ac$ be a function with a singularity on a fixed point. Then $g$ has infinite order and $\<g\>$ is undistorted in $V\!\Ac$.
    \end{lmm}

    \begin{prf}
        Let $s$ be this singularity and take a regular neighborhood $U$ of it. Since $g$ is continuous in this interval, there can only happen three situations: $g(x)<x$ for all $x\in U\trec\{s\}$, $g(x)>x$ for all $x\in U\trec\{s\}$ or there exists $x_0\in U\trec\{s\}$ with $g(x_0)=x_0$. We solve the last case first: consider the segment containing $x_0$. If it has slope $1$, then the next segment (in the direction towards $s$) satisfies that it touches the line $y=x$ and its slope is not $1$. Anyhow, we have found a segment of a slope different than $1$ with a fixed point (which may or may not be dyadic). Studying the slope of the corresponding segment of $g^k$, we see that $g$ has infinite order and that $\<g\>$ is undistorted.

        \begin{figure}[H]
            \centering
            \includegraphicsifexists[width=0.45\linewidth]{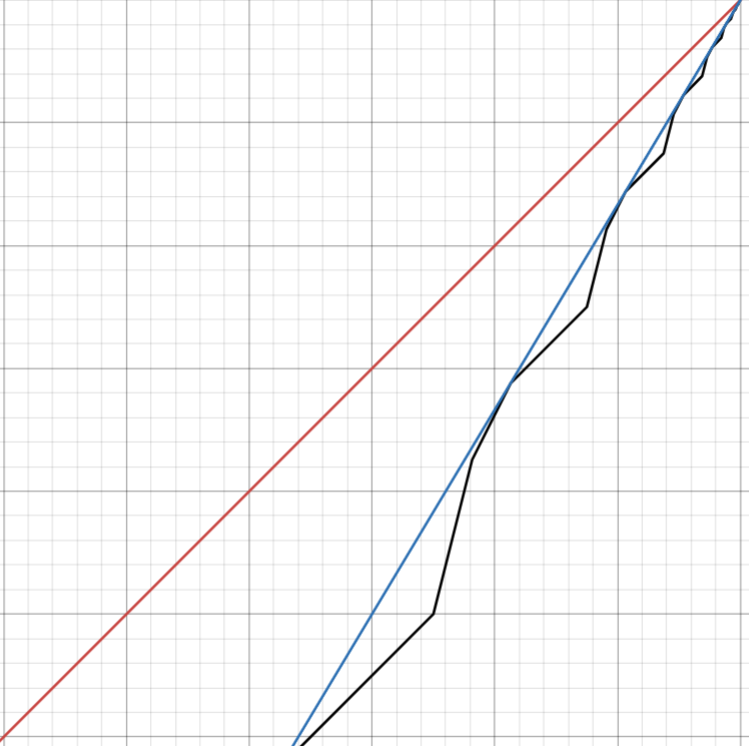}
            \caption{$y=g(x)$ in black, $y=x$ in red and $y=p-\lambda(p-x)$ in blue. The singularity $s=p^-$ is at the top right.}\label{puntfix}
        \end{figure}

        Now we do some casework on whether $s=p^-$ or $s=p^+$ and on whether $g(x)<x$ or $g(x)>x$ for all $x\in U$. In any case, due to the nature of a singularity we have that for every $x$ in $U$, the points $(p,p)$, $(x,g(x))$ and $\left(\frac{x+p}2,\frac{g(x)+p}2\right)$ are collinear. In the $s=p^-$, $g(x)<x$ case (see \Cref{puntfix}), we can take the minimum slope $\lambda$ between the points $(x,g(x))$ and $(p,p)$ for $x$ in $U$ and this minimum is achieved. The fact that $g(x)<x$ tells us that $\lambda>1$, and $\frac{p-g(x)}{p-x}\geq\lambda$ tells us that $g(x)\leq p-\lambda(p-x)$, which means that the graph of $g$ lies below the line through $(p,p)$ and slope $\lambda$.

        Since $g$ is increasing in $U$, we see that for $x$ sufficiently close to $p$ (in the sense that $g(x)$ is still in $U$), we have that $g^2(x)=g(g(x))\leq p-\lambda(p-g(x))\leq p-\lambda(p-(p-\lambda(p-x)))=p-\lambda^2(p-x)$. Repeating this, we see that there exists some neighborhood of $p^-$ where $g^k(x)\leq p-\lambda^k(p-x)$. This forces $g^k$ to have a segment with slope at least $\lambda^k$, which can only be constructed using a linear-in-$k$ number of generators, so $g$ has infinite order and $\<g\>$ is undistorted.

        The case where $g(x)>x$ and $s=p^+$ is done in the same way. The remaining cases are $g(x)<x$ with $s=p^+$ and $g(x)>x$ with $s=p^-$, which are done analogously by taking the maximum slope instead.
    \end{prf}

    This ends the proof that infinite cyclic subgroups of $V\!\Ac$ are undistorted. Using one of the remarks in the previous section, the next follows as a corollary:

    \begin{crl} $V\!\Ac$ does not contain any group with cyclic subgroups that are distorted.
    \end{crl}

    This corollary invalidates $V\!\Ac$ as a candidate to a universal group for groups with solvable word problem, as there exist groups with solvable word problem and distorted cyclics.\\

    As a bonus, we present the following result:

    \begin{prp} If $f\in V\!\Ac$ is of finite order, then it is conjugate to an element of $V$.
    \end{prp}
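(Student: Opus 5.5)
We reduce to the previous analysis: use the first lemma of this section to pass to a conjugate with at most one singularity per orbit, then show that for a finite-order element this conjugate has no singularities at all, hence lies in $V$. Since conjugation preserves finite order, we may assume from the start that $f$ has finite order, say $f^N=\Id$, and that $f$ has at most one singularity in each orbit of a singularity.

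Suppose, for contradiction, that $f$ still has a singularity $s_0$. First, the orbit of $s_0$ cannot be infinite. Indeed, since $f^N=\Id$, every orbit is finite. (Equivalently, the induction above shows $|\Sing(f^n)|=n$ for all $n$ in the infinite-orbit case, which is incompatible with $f^N=\Id$ having no singularities.) So the orbit is finite, and we label it $s_{-m+1},\dots,s_0$ as in the text, with $f(s_0)=s_{-m+1}$.

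The induction used in the finite-orbit case then gives $\Sing(f^n)=\{s_{-n+1},\dots,s_0\}$ for $n\le m$. In particular $f^m$ has the singularity $s_0$, and $f^m(s_0)=s_0$. Thus $g=f^m$ is an element of $V\!\Ac$ with a singularity at a fixed point. By the last lemma, $g$ has infinite order, so $f$ does too, contradicting finite order. Hence the conjugate has no singularities.

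The elements of $V\!\Ac$ without singularities are exactly the elements of $V$, as noted before Higman's lemma, so $f$ is conjugate to an element of $V$.

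The only point needing care is that the lemma giving one singularity per orbit produces a conjugate in $V\!\Ac$. Its proof conjugates by the element $a=f\cdot f'^{-1}\in V\!\Ac$, and it terminates with at most one singularity per orbit regardless of the order of $f$. It does not fully eliminate singularities by itself, which is why the fixed-point lemma is needed to rule out the remaining case.
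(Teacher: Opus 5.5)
Your proof is correct and follows the paper's argument: conjugate to an element with at most one singularity per orbit, then show that any surviving singularity forces infinite order. An infinite orbit is impossible for a finite-order element, and on a periodic orbit of length $m$ the power $f^m$ has a singularity at a fixed point, so the fixed-point lemma applies. You simply write out the two cases that the paper's proof covers with ``we have seen.''
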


    \begin{prf}
        We know that we can take a conjugate of $f$ with at most one singularity per orbit. However, for both infinite and periodic orbits, we have seen that a single singularity forces $f$ to have infinite order, which is not the case. Therefore, this conjugate of $f$ cannot have singularities and belongs to $V$.
    \end{prf}

\bibliography{pepsrefs}
\bibliographystyle{plain}

\end{document}